\DeclareMathOperator{\cl}{cl}
\DeclareMathOperator{\rank}{rank}
\newtheorem{defin}{Definition}
\newtheorem{proposition}{Proposition}
\newtheorem{theo}{Theorem}
\newtheorem{cor}{Corollary}
\begin{document}
\title{On simple matroids with a unique minimal tropical basis}
\author{Winfried Hochst\"attler\\FernUniversit\"at in Hagen}
\date{}
\maketitle
\begin{abstract}
  In this note we characterize tropical bases as sets of circuits that by
  orthogonality determine the set of cocircuits of a simple matroid.
  Furthermore, we show that any circuit, which itself is closed, must be
  contained in any tropical basis. This yields a characterization of
  simple matroids which have a unique minimal tropical basis, giving a
  solution for a problem posted in~\cite{TF24}.
\end{abstract}
\section{Introduction}
In tropical geometry a set of defining tropical linear forms for a
tropical linear space is called a tropical basis. In our setting these
linear forms correspond to the circuits of a simple matroid. Since we will
give a purely combinatorial definition of tropical bases of simple matroids
we will not further bother with the connections to tropical geometry
and refer the interested reader to ~\cite{Nak23,YY07,TF24}.

In a recent post in the matroid union blog~\cite{TF24}, Tara Fife asked
for a characterization of those simple matroids which have a unique minimal
tropical basis. In this note we will show that any circuit which is
closed must be contained in any minimal tropical basis. Furthermore,
if a tropical basis contains a circuit which is not closed, then it is
not minimal. Moreover, we characterize tropical bases as those sets
of circuits which, purely by orthogonality, define the set of
cocircuits of the simple matroid. This, together with the above, yields a
characterization of simple matroids with a unique minimal tropical basis.

Additionally, we prove a strict generalization of the result from~\cite{Nak23} that binary simple matroids
have unique minimal tropical bases.

We assume familiarity with matroid theory, the standard reference is ~\cite{oxley}.

\section{Tropical bases}
Let $M$ be a simple matroid $M$ and let $\cl$ denote its closure
operator.  A subset $\mathcal{C}' \subseteq \mathcal{C}$ its set of
circuits is a {\em tropical basis of $M$}, if for every set $X \ne
\cl(X)$, there exists $C \in \mathcal{C}'$ sucht that $|C \setminus
X|=1$.  A tropical basis $\mathcal{C}'$ of $M$ is {\em minimal} if for all
$C \in \mathcal{C}'$ the family $\mathcal{C}' \setminus C$ is no
longer a tropical basis.

Let $\mathcal{D}$ denote a family of sets on a ground set $E$ and $A
\subseteq E$. Then $A$ is {\em orthogonal to $\mathcal{D}$} denoted by
$A \bot \mathcal{D}$ if and only if
\[\forall D \in \mathcal{D}: |A \cap D| \ne 1.\]

\begin{proposition}\label{prop:1}
  Let $\mathcal{C}' \subseteq \mathcal{C}$. Then $\mathcal{C}'$ is a
  tropical basis of $M$ if and only if
 \[\bar X \bot \,\mathcal{C}' \Longleftrightarrow \cl(X)=X,\]
where $\bar X= E \setminus X$ denotes the complement of $X$. 
\end{proposition}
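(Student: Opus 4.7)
The plan is to rewrite the tropical basis condition so that it lines up exactly with one direction of the biconditional on the right-hand side, and to observe that the other direction is automatic for \emph{every} subset of circuits.

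First I would note the trivial set identity $|C \setminus X| = |C \cap \bar X|$. Using it, the definition of tropical basis reads: for every $X$ with $\cl(X) \ne X$, there exists $C \in \mathcal{C}'$ with $|C \cap \bar X| = 1$, i.e.\ $\bar X \not\bot\, \mathcal{C}'$. Contrapositively, $\mathcal{C}'$ is a tropical basis iff for every $X \subseteq E$, $\bar X \bot \,\mathcal{C}' \Rightarrow \cl(X) = X$. So this matches the ``$\Rightarrow$'' half of the biconditional in the proposition.

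Next I would handle the ``$\Leftarrow$'' half, namely $\cl(X)=X \Rightarrow \bar X \bot \,\mathcal{C}'$, and argue that it holds for \emph{any} $\mathcal{C}' \subseteq \mathcal{C}$, making it vacuous as an extra requirement. The key step is the standard matroid fact that a flat meets no circuit in all but one element: if $X$ is closed and some circuit $C$ satisfied $|C \cap \bar X| = 1$, say $C \cap \bar X = \{e\}$, then $C \setminus \{e\} \subseteq X$, and by the circuit property $e \in \cl(C \setminus \{e\}) \subseteq \cl(X) = X$, contradicting $e \in \bar X$. Hence $|C \cap \bar X| \ne 1$ for every circuit $C$, in particular every $C \in \mathcal{C}'$.

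Combining the two halves yields the proposition. There is no real obstacle here: the content is almost entirely bookkeeping, and the only substantive ingredient is the elementary closure/circuit argument in the preceding paragraph.
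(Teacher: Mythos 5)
Your proof is correct and follows essentially the same route as the paper: unfold the definition via $|C\setminus X| = |C\cap \bar X|$, and observe that the direction $\cl(X)=X \Rightarrow \bar X \bot\, \mathcal{C}'$ holds for any subfamily of circuits. You actually go one step further than the paper by justifying that last fact explicitly (via $e \in \cl(C\setminus\{e\}) \subseteq \cl(X)$), where the paper simply asserts it.
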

\begin{proof}
  Let $\mathcal{C}'$ be a tropical basis. Assume $X$ is closed. Then even for all $C \in \mathcal{C}$
  \[|C \setminus X| = |C \cap \bar X| \ne 1.\] Thus $\bar X \bot
  \mathcal{C}'$. Now assume $X$ is not closed. Since
  $\mathcal{C}'$ is a tropical basis there exists some $C \in
  \mathcal{C}'$ such that $|C \setminus X| = |C \cap \bar X|=1$.
  Hence, $\bar X$ is not orthogonal to $\mathcal{C}'$.

  Now assume $\mathcal{C}'$ is not a tropical basis. Then there exists
  a set $X$ which is not closed but for all $C \in \mathcal{C}'$ we have
\[|C \setminus X|=| C \cap \bar X | \ne 1.\] Hence, $X$ is not closed but $\bar X \bot \mathcal{C}$.

Note that $\cl(X)=X$ implies that $X$ is orthogonal to any set of circuits.
\end{proof}

\begin{proposition}\label{prop:contained}
  Let $C \in \mathcal{C}$ such that $\cl(C)=C$, then $C$ is contained in every tropical basis $\mathcal{C}'$.
\end{proposition}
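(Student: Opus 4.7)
My plan is to exhibit, for an arbitrary $e \in C$, a non-closed set $X$ such that the only circuit $C^\ast \in \mathcal{C}$ with $|C^\ast \setminus X| = 1$ is $C$ itself. Since $\mathcal{C}'$ is a tropical basis, some circuit in $\mathcal{C}'$ must meet $\bar X$ in exactly one element, and if that circuit is forced to be $C$, we are done.

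The natural candidate is $X := C \setminus \{e\}$. First I would check that $X$ is not closed: because $C$ is a circuit, $e \in \cl(C \setminus \{e\})$, so $\cl(X) \supsetneq X$. Next, I would compute $\cl(X)$ exactly, using the hypothesis $\cl(C)=C$: by monotonicity $\cl(X) \subseteq \cl(C) = C$, and combined with $C \subseteq \cl(X)$ this gives $\cl(X) = C$. This identity is the key leverage provided by the closedness of $C$.

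Now apply Proposition~\ref{prop:1} (or the tropical basis definition directly) to pick $C^\ast \in \mathcal{C}'$ with $|C^\ast \cap \bar X| = 1$, say $C^\ast \cap \bar X = \{f\}$. Since $\bar X = (E \setminus C) \cup \{e\}$, either $f = e$ or $f \notin C$. In the first case, $C^\ast \setminus \{e\} \subseteq C \setminus \{e\}$, so $C^\ast \subseteq C$, and two circuits with one contained in the other must coincide, giving $C^\ast = C$. In the second case, $C^\ast \setminus \{f\} \subseteq X$, so $f \in \cl(C^\ast \setminus \{f\}) \subseteq \cl(X) = C$, contradicting $f \notin C$. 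Hence $C = C^\ast \in \mathcal{C}'$.

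I do not anticipate a real obstacle: the only subtle point is recognising that the hypothesis $\cl(C)=C$ is exactly what is needed to pin down $\cl(C \setminus \{e\}) = C$, which in turn forces the witnessing circuit to lie inside $C$ and therefore equal $C$. Everything else is a direct application of the tropical basis property together with the elementary fact that no circuit properly contains another.
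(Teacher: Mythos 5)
Your proof is correct and follows essentially the same route as the paper: take $X = C\setminus\{e\}$, invoke the tropical basis property to get a circuit $C^\ast$ with $C^\ast\setminus X=\{f\}$, and use $f\in\cl(C^\ast\setminus\{f\})\subseteq\cl(X)\subseteq\cl(C)=C$ to force $f=e$ and hence $C^\ast=C$. Your explicit case split and the observation that $\cl(X)=C$ exactly are slightly more detailed than the paper's version, but the underlying argument is identical.
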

\begin{proof}
  Let $e \in C$ and $X = C \setminus \{e\}$. The $X \ne \cl(X) \ni e$.
  Hence there exists $C' \in \mathcal{C}'$ and $f \in C'$ such that
  $C' \setminus X=\{f\}$. Now $f \in \cl(X) \subseteq  \cl(C)$ and $\cl(C)=C$ implies $f=e$ and $C'=C.$
\end{proof}
\section{A sufficient condition for the uniqueness of a minimal
  tropical basis}
\begin{defin}
  Let $M$ be a simple matroid on a finite set $E$ and $D \subseteq E$. Then
  $D$ is a {\em double circuit} if for all $e \in D: \rank(D)=|D|-2=
  \rank(D \setminus \{e\})$.
\end{defin}

\begin{proposition}
  If $D$ is a double circuit in $M$, then it has a unique partition $D_1
  \cup \ldots \cup D_k$ such that the set of circuits contained in $D$
  is exactly \[\{D \setminus D_ i \mid 1\le i \le k\}.\]
\end{proposition}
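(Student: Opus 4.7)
\medskip
\noindent\textbf{Proof plan.} The plan is to analyse the restriction $M|D$ via matroid duality. The double circuit hypothesis says exactly that $M|D$ has rank $|D|-2$ and that no $e \in D$ is a coloop of $M|D$ (since $\rank(D \setminus \{e\}) = \rank(D) = |D|-2$). Dually, this means $(M|D)^*$ is a loopless matroid of rank $2$.

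In any loopless rank-$2$ matroid, the rank-$1$ flats, the hyperplanes, and the parallel classes all coincide and form a partition of the ground set. Let $D_1, \ldots, D_k$ be this partition of $D$ arising from $(M|D)^*$. The cocircuits of $(M|D)^*$ are the complements of its hyperplanes, hence precisely $\{D \setminus D_i \mid 1 \le i \le k\}$; and by $(M|D)^{**} = M|D$ these are exactly the circuits of $M|D$, i.e.\ the circuits of $M$ contained in $D$.

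Uniqueness is then immediate: any partition with the stated property recovers its parts as the complements in $D$ of the circuits contained in $D$, and the set of circuits contained in $D$ is determined by $M$ and $D$, so the partition is forced to be the parallel-class partition of $(M|D)^*$.

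I expect the only real content to be the opening translation from \emph{double circuit} to \emph{$(M|D)^*$ is loopless of rank~$2$}, which is the conceptual crux; after that the proposition is a direct appeal to the elementary structure of loopless rank-$2$ matroids together with standard matroid duality, so I do not anticipate any serious obstacle.
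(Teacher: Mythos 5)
Your proof is correct and takes essentially the same route as the paper: the paper's (much terser) proof also passes to the dual of the restriction to $D$, observes it is a line (rank $2$), and identifies the $D_i$ as its parallel classes, whose complements are the cocircuits of the dual, i.e.\ the circuits contained in $D$. Your version merely spells out the details the paper leaves implicit, in particular that the non-coloop condition $\rank(D\setminus\{e\})=\rank(D)$ makes $(M|D)^*$ loopless, and the uniqueness argument.
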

\begin{proof}
  The dual of $D$ is a line. Then the complement of each of its points
  is a cocircuit in the dual Hence we find the $D_i$'s as parallel
  classes of points ot these points.
\end{proof}
We call $k$ the {degree} of the double circuit.
\begin{theo}\label{theo:2}
  Let $M$ be a simple matroid such that for every $C \in \mathcal{C}$ either
  $\cl(C)=C$ or there exist $f \in \cl(C)$ such that $C \cup \{f\}$ is
  a double circuit of degree $3$. Then 
  \[\{C \in \mathcal{C} \mid \cl(C)=C\}\] is the unique minimal tropical
  basis of $M$.
\end{theo}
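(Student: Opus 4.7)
The plan is to show that the set $\mathcal{B}:=\{C\in\mathcal{C}\mid\cl(C)=C\}$ of closed circuits is itself a tropical basis of $M$. By Proposition~\ref{prop:contained}, $\mathcal{B}$ is already contained in every tropical basis of $M$, so once $\mathcal{B}$ is verified to be a tropical basis, it is automatically the unique minimal one and the theorem follows.

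To verify the tropical basis property I would fix $X$ with $\cl(X)\ne X$, pick $e\in\cl(X)\setminus X$, and use the circuit characterisation of closure to obtain a circuit $C$ with $e\in C\subseteq X\cup\{e\}$; then $|C\setminus X|=1$. Choose such a $C$ of \emph{minimum cardinality}. The heart of the argument is to show that this minimum-size $C$ is already closed, which I would prove by contradiction.

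So suppose $\cl(C)\ne C$. The hypothesis of the theorem yields some $f\in\cl(C)\setminus C$ such that $D:=C\cup\{f\}$ is a double circuit of degree $3$. Because $C=D\setminus\{f\}$ is a circuit contained in $D$, the canonical partition supplied by the preceding proposition must take the form $\{f\}\cup D_2\cup D_3$ with $C=D_2\cup D_3$, and the three circuits in $D$ are $C$, $C':=\{f\}\cup D_2$ and $C'':=\{f\}\cup D_3$. Without loss of generality $e\in D_2$, and $f\in\cl(C)\subseteq\cl(X\cup\{e\})=\cl(X)$.

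Now I would split on whether $f\in X$. If $f\in X$, then $D_2\setminus\{e\}\subseteq X$ gives $C'\setminus X=\{e\}$, with $|C'|=1+|D_2|<|D_2|+|D_3|=|C|$ provided $|D_3|\ge 2$; if instead $|D_3|=1$ then $C''$ is a two-element circuit, impossible in the simple matroid $M$. If $f\notin X$, then $D_3\subseteq X$ gives $C''\setminus X=\{f\}$, with $|C''|=1+|D_3|<|C|$ provided $|D_2|\ge 2$; and $|D_2|=1$ would force $D_2=\{e\}$ and make $C'$ a two-element circuit, again contradicting simplicity. Either way we produce a strictly shorter circuit sharing the property $|\,\cdot\setminus X|=1$, contradicting the minimal choice of $C$; hence $C$ must be closed and $C\in\mathcal{B}$.

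I expect the main technical obstacle to be precisely this case analysis and its interaction with simplicity: one has to select the correct one of the two ``other'' circuits in $D$ in each branch and observe that the only configuration in which the replacement fails to be strictly smaller is when $D_2$ or $D_3$ reduces to a singleton, which is exactly what the absence of parallel elements forbids. Once this is in place, the minimum-size circuit provided by the circuit characterisation of closure always lies in $\mathcal{B}$, and the reduction via Proposition~\ref{prop:contained} finishes the argument.
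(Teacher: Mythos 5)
Your proof is correct and follows essentially the same route as the paper: reduce via Proposition~\ref{prop:contained} to showing that the closed circuits form a tropical basis, take a minimum-cardinality circuit $C$ with $|C\setminus X|=1$, and use the degree-$3$ double circuit $C\cup\{f\}$ together with simplicity ($\min\{|D_2|,|D_3|\}\ge 2$) to produce a strictly shorter such circuit. Your explicit case split on whether $f\in X$, selecting $\{f\}\cup D_2$ or $\{f\}\cup D_3$ accordingly, is in fact slightly more careful than the paper's proof, which exhibits only one replacement circuit and leaves the case $f\in X$ implicit.
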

\begin{proof}
  By Proposition \ref{prop:contained} $\{C \in \mathcal{C} \mid \cl(C)=C\}$
  is contained in any tropical basis. Thus, it suffices to show that it
  is a tropical basis itself. Let $X \subseteq E$ such that $X
  \ne \cl(X)$. Hence there exists a circuit $C \in \mathcal{C}$ such
  that $C \setminus X=\{d\}$. Choose such a circuit with as few elements
  as possible. Assume $\cl(C)\ne C$. Then there exists $f \in \cl(C)
  \setminus C$ such that $C \cup \{f\}=D_1\cup D_2 \cup\{f\}$ is a
  double circuit of degree $3$.  Since $M$ is simple we have
  $\min\{|D_1|,|D_2|\} \ge 2$. We may assume $d \in D_2$. Then, $D_1\cup\{f\}$ is a circuit contradicting the
  choice of $C$.
\end{proof}

\begin{cor}[\cite{Nak23} Theorem 4]
  Binary simple matroids have a unique minimal tropical basis.
\end{cor}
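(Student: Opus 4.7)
The plan is to verify that every binary simple matroid $M$ satisfies the hypothesis of Theorem~\ref{theo:2}, and then invoke it. So let $C \in \mathcal{C}$ be a circuit of $M$ with $\cl(C) \ne C$, fix $f \in \cl(C) \setminus C$, and set $D := C \cup \{f\}$. First I would check, without using binarity, that $D$ is a double circuit. For each $e \in C$, the set $C \setminus \{e\}$ is a basis of $\cl(C)$, so $\cl(C \setminus \{e\}) = \cl(C) \ni f$, giving $\rank((C \setminus \{e\}) \cup \{f\}) = |C|-1$. Combined with $\rank(C) = |C|-1$, this yields $\rank(D) = \rank(D \setminus \{e\}) = |C|-1 = |D|-2$ for every $e \in D$.

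Next I would use binarity to show the degree of $D$ is $3$. Over $GF(2)$, the cycle space of $M|D$ has dimension $|D| - \rank(D) = 2$, so it contains exactly four elements and hence three nonempty cycles, each of which is a disjoint union of circuits lying in $D$. The nullity of $D$ being $2$ forces some circuit $C' \ne C$ in $D$: for any $e \in C$, the set $(C \setminus \{e\}) \cup \{f\}$ has nullity $1$ and so contains a circuit, which cannot be $C$. Any such $C'$ must contain $f$ (otherwise $C' \subseteq C$ would force $C' = C$), and simplicity of $M$ gives $|C'| \ge 2$, whence $C \cap C' \supseteq C' \setminus \{f\} \ne \emptyset$.

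The three nonempty cycles of $M|D$ are therefore $C$, $C'$, and $C \triangle C'$. I claim the last is itself a circuit: otherwise it would decompose as a disjoint union of two or more circuits of $M|D$, each a proper subset of $C \triangle C'$ and each belonging to $\{C, C', C \triangle C'\}$, but neither $C$ nor $C'$ can be contained in $C \triangle C' = (C \cup C') \setminus (C \cap C')$ since $C \cap C' \ne \emptyset$. Thus $D$ contains exactly three circuits, so $D$ is a double circuit of degree $3$, and Theorem~\ref{theo:2} delivers the conclusion. The main obstacle is the degree-$3$ verification; it reduces to combining the dimension count for the binary cycle space (which caps the number of circuits in $D$ at $3$) with the simplicity-based lower bound of $3$.
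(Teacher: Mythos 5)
Your proposal is correct and follows the same route as the paper: reduce to checking the hypothesis of Theorem~\ref{theo:2} by showing that $C\cup\{f\}$ is a double circuit of degree $3$ whenever $f\in\cl(C)\setminus C$. The paper asserts this in two sentences (using that binary double circuits have degree $2$ or $3$, and degree $2$ is excluded by simplicity), whereas you supply the full verification via the $GF(2)$ cycle space; the content is the same.
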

\begin{proof}
  Binary matroids have only double circuits of degree $2$ or $3$. If
  $f \in \cl(C)$ for a circuit $C$, then $C \cup \{f\}$ is a double
  circuit of degree $3$.
\end{proof}

\begin{cor}
  Let $N$ arise from $U_{2,n}$ by replacing each element by a pair of parallel elements and let $M$ denote the dual of $N$. Then $M$ has a unique tropical bases.
\end{cor}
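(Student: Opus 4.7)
The plan is to apply Theorem~\ref{theo:2} with its first alternative holding universally: I aim to show that \emph{every} circuit of $M$ is already closed. Equivalently, Proposition~\ref{prop:contained} will then force every circuit into every tropical basis, and since any tropical basis is by definition a subfamily of $\mathcal{C}$, the full set $\mathcal{C}$ is the only tropical basis of $M$.

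First I would describe $M$ explicitly. Writing $E$ for the $2n$-element ground set and $P_1,\ldots,P_n$ for the parallel pairs of $N$, the hyperplanes of the rank-$2$ matroid $N$ are precisely the $P_i$, so its cocircuits are the complements $E\setminus P_i$. By duality, these are exactly the circuits of $M$, each of size $2n-2$. For $n\ge 3$ the matroid $M$ is simple: $N$ has no coloops (so $M$ has no loops) and no $2$-element cocircuits (so $M$ has no parallel pairs).

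The main step is the closure computation. Using the standard duality identity
\[
\rank_M(A)=|A|+\rank_N(E\setminus A)-\rank(N),
\]
one finds $\rank_M(E\setminus P_i)=(2n-2)+1-2=2n-3$, whereas adjoining either element of $P_i$ gives a set of rank $(2n-1)+1-2=2n-2$. Hence neither element of $P_i$ lies in $\cl(E\setminus P_i)$, and therefore $\cl(E\setminus P_i)=E\setminus P_i$: every circuit of $M$ is closed.

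With this in hand, Proposition~\ref{prop:contained} gives $\mathcal{C}\subseteq\mathcal{C}'$ for any tropical basis $\mathcal{C}'$; the reverse inclusion is automatic, so $\mathcal{C}$ is the unique tropical basis of $M$, in particular the unique minimal one. The entire argument reduces to that one-line rank calculation, and I do not foresee any real obstacle beyond keeping the duality formula straight.
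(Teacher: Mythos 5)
Your proposal is correct and follows essentially the same route as the paper: identify the circuits of $M$ as the complements $E\setminus P_i$ of the parallel pairs, observe they are all closed, and conclude via Proposition~\ref{prop:contained} and Theorem~\ref{theo:2}. The only difference is that you substantiate the paper's ``clearly, they are all closed'' with an explicit dual-rank computation (and note the needed hypothesis $n\ge 3$ for simplicity), which is a welcome but not substantively different addition.
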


\begin{proof}
  The circuits of $M$ are exactly the complements of the points of $N$
  and hence consist af all elements except one pair of elements
  parallel in $N$. Clearly, they are all closed.
\end{proof}

Since $M$ from the last theorem is non-binary for $n \ge 4$
Theorem~\ref{theo:2} is a proper generalization of \cite{Nak23}
Theorem 4. We are not aware, though, of a non-binary matroid which
satisfies the assumptions of Theorem~\ref{theo:2} and has a circuit
which is not closed.
\section{ A necessary condition for the uniqueness of a minimal
  tropical basis and a characterization}
\begin{theo}\label{theo:3}
  If a simple matroid $M$ has a minimal tropical basis which contains
  a circuit $C$ such that $\cl(C) \ne C$, then this minimal tropical
  basis is not unique.
\end{theo}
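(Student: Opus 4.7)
The plan is to exhibit, given a minimal tropical basis $\mathcal{C}'$ containing a non-closed circuit $C$, a second tropical basis that omits $C$; greedily extracting a minimal sub-family then produces a minimal tropical basis different from $\mathcal{C}'$. The circuits I will substitute for $C$ are the ``sibling'' circuits that sit alongside $C$ inside a natural double circuit.

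First I would pick some $f\in \cl(C)\setminus C$, which exists by assumption, and check directly that $D:=C\cup\{f\}$ is a double circuit: $\rank D=\rank C=|C|-1=|D|-2$, and removing any single element of $D$ leaves the rank at $|C|-1$. The preceding proposition on double circuits then furnishes a partition $D_1,\ldots,D_k$ whose complements in $D$ are exactly the circuits contained in $D$. Since $C$ is one of them, I take $D_1=\{f\}$. The case $k=2$ would force $D\setminus D_2=\{f\}$ to be a circuit, i.e.\ a loop, contradicting simplicity, so $k\ge 3$, and the remaining circuits $C_i:=(C\setminus D_i)\cup\{f\}$, $2\le i\le k$, are all distinct from $C$.

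Next I would set $\tilde{\mathcal{C}}:=(\mathcal{C}'\setminus\{C\})\cup\{C_2,\ldots,C_k\}$ and verify via Proposition~\ref{prop:1} that it is a tropical basis. Let $X$ be non-closed. If some circuit in $\mathcal{C}'\setminus\{C\}\subseteq \tilde{\mathcal{C}}$ already witnesses $X\ne\cl(X)$, we are done; otherwise, because $\mathcal{C}'$ is a tropical basis, $C$ must be the witness, giving $C\setminus X=\{d\}$ for a unique $d$. A direct expansion, using $f\notin C$, yields
\[|C_i\setminus X|=|\{d\}\setminus D_i|+|\{f\}\setminus X|,\]
each summand being $0$ or $1$. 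To hit the value $1$ I would pick the unique $i\ge 2$ with $d\in D_i$ when $f\notin X$, and any of the $k-2\ge 1$ indices with $d\notin D_i$ when $f\in X$.

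Finally, since $\tilde{\mathcal{C}}$ is a tropical basis and $C\notin\tilde{\mathcal{C}}$, removing superfluous circuits inside $\tilde{\mathcal{C}}$ produces a minimal tropical basis $\mathcal{C}''\subseteq\tilde{\mathcal{C}}$ that still avoids $C$, hence differs from $\mathcal{C}'$. The main obstacle I expect is the case $f\in X$ in the middle step: a single swap $C\mapsto C_2$ would fail there, which is exactly where simplicity is used, via $k\ge 3$, to make a second sibling $C_j$ available.
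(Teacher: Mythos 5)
Your proof is correct and follows essentially the same route as the paper: replace $C$ by the sibling circuits $D\setminus D_i$ of the double circuit $D=C\cup\{f\}$ and verify via orthogonality that the resulting family is still a tropical basis, then pass to a minimal sub-basis avoiding $C$. Your case analysis is in fact slightly more careful than the paper's, whose displayed identity $\left((C\setminus D_1)\cup\{f\}\right)\setminus X=\{f\}$ tacitly assumes $f\notin X$; your observation that when $f\in X$ one must instead use a second sibling circuit (available because simplicity forces the degree to be at least $3$) supplies the case the paper leaves implicit.
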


\begin{proof}
  Let $\mathcal{C}'$ be a minimal tropical basis of $M$ and $C \in
  \mathcal{C}'$ such that there exists $f \in \cl(C) \setminus C$. Then
  $C \cup f$ is a double circuit. Let $D_1\cup\ldots \cup D_k \cup \{f\},
  k\ge 2$ be its partition. Define
\[\mathcal{C}'':= (\mathcal{C}' \setminus C) \cup \left\{ (C \cup \{f\}) \setminus D_i \mid 1\le i \le k \right \}.\]
We claim that $\mathcal{C}''$ is a tropical basis and hence it
contains a minimal tropical basis $\mathcal{C}'''$ different from
$\mathcal{C}'$.

To see that, consider a set $X$ for which $C$ is the only element in
$\mathcal{C}'$ such that $C \setminus X = \{d\}$. We may assume that $d \in D_1$. But then
\[\left((C \setminus D_1) \cup \{f\}\right ) \setminus X = \{f\}.\]
Hence $\mathcal{C}''$ is a tropical basis.
\end{proof}

This yields the following characterization of simple matroids having a unique
minimal tropical basis.

\begin{theo}\label{theo:4}
  A simple matroid $M$ has a unique tropical basis if and only if
 \[\bar X \bot \{C \in \mathcal{C} \mid \cl(C)=C\} \Longleftrightarrow \cl(X)=X.\]
\end{theo}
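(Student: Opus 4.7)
The plan is to assemble Theorem~\ref{theo:4} from the three results already proved: Proposition~\ref{prop:1}, which converts the tropical-basis property into the orthogonality statement; Proposition~\ref{prop:contained}, which says every closed circuit lies in every tropical basis; and Theorem~\ref{theo:3}, which says that the presence of a non-closed circuit in a minimal tropical basis destroys uniqueness. The argument is essentially an extraction: I need to show that uniqueness is equivalent to $\{C \in \mathcal{C}\mid \cl(C)=C\}$ being itself a tropical basis, and then quote Proposition~\ref{prop:1}.

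For the direction $(\Leftarrow)$, I would assume the displayed biconditional holds. By Proposition~\ref{prop:1}, applied to $\mathcal{C}' := \{C\in\mathcal{C}\mid \cl(C)=C\}$, this family is a tropical basis. By Proposition~\ref{prop:contained} every tropical basis must contain $\mathcal{C}'$, so $\mathcal{C}'$ is contained in every minimal tropical basis; in particular no proper subfamily of $\mathcal{C}'$ can be a tropical basis, so $\mathcal{C}'$ is itself the unique minimal tropical basis.

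For the direction $(\Rightarrow)$, suppose $M$ has a unique minimal tropical basis $\mathcal{C}'$. Theorem~\ref{theo:3} forbids any $C\in\mathcal{C}'$ with $\cl(C)\ne C$, so every element of $\mathcal{C}'$ is closed; conversely Proposition~\ref{prop:contained} forces every closed circuit into $\mathcal{C}'$. Hence $\mathcal{C}' = \{C\in\mathcal{C}\mid \cl(C)=C\}$, and since this is a tropical basis, Proposition~\ref{prop:1} yields the desired biconditional.

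There is no real obstacle: the only thing to be careful about is reading ``unique tropical basis'' in the statement as ``unique minimal tropical basis'', which is the sense used throughout the paper (every superset of a tropical basis is again a tropical basis, so literal uniqueness of tropical bases almost never holds). Once that is clarified, both directions are immediate from the stated propositions and Theorem~\ref{theo:3}, and the proof should fit in a few lines.
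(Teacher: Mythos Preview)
Your proposal is correct and follows the same strategy as the paper---reducing Theorem~\ref{theo:4} to the earlier results via Proposition~\ref{prop:1}. The paper's one-line proof cites only Proposition~\ref{prop:1} and Theorem~\ref{theo:2}, whereas you (more carefully) invoke Proposition~\ref{prop:contained} and Theorem~\ref{theo:3}; since Theorem~\ref{theo:3} is what actually handles the $(\Rightarrow)$ direction, your write-up is if anything more complete than the paper's.
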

\begin{proof}
  This follows from Proposition~\ref{prop:1} and Theorem~\ref{theo:2}.
\end{proof}

Hence $M$ has a unique minimal tropical basis if the minimal non-empty
sets which are orthogonal to all closed circuits form the family of
cocircuits of $M$.

Note, that in $P_7$ (see ~\cite{oxley} Page 646), which does not
satisfy the assumptions of Theorem~\ref{theo:3}, any set which is not
closed meets one of the three point lines in exactly $2$ points. Hence
its complement is not orthogonal to the set of three point lines. This
shows that $P_7$ has a unique minimal tropical basis by
Theorem~\ref{theo:4}, as was already observed by~\cite{Nak23}.

\end{document}